\let\comjohannes\undefined
\allowcomments{\comjohannes}{JS}{Johannes}{magenta}
\newcommand{\WWW}{W_n^{\theta}(\Psi)}
\newcommand{\WWs}{W_n^{\theta}(\psi)}
\newcommand{\WWh}{W_n^0(\psi)}
\newtheorem*{thmsch}{Theorem SBV}
\newtheorem*{GBSP}{Generalised Baker--Schmidt Problem for Hausdorff Measure}
\newcommand{\SSS}{S_n^{\bftheta}(\Psi)}
\begin{document}
\title{The generalised Baker--Schmidt problem on hypersurfaces}

\begin{abstract}
The Generalised Baker--Schmidt Problem (1970)  concerns the $f$-dimensional Hausdorff measure of the set of $\psi$-approximable points on a nondegenerate manifold. There are two variants of this problem, concerning simultaneous and dual approximation. Beresnevich--Dickinson--Velani (in 2006, for the homogeneous setting) and Badziahin--Beresnevich--Velani (in 2013,  for the inhomogeneous setting) proved the divergence part of this problem for dual approximation on arbitrary nondegenerate manifolds. The corresponding convergence counterpart represents a major challenging open question and the progress thus far has only been attained over planar curves. In this paper, we settle this problem for  hypersurfaces  in a more general setting, i.e. for inhomogeneous approximations and with a non-monotonic multivariable approximating function.
\end{abstract}
\maketitle

\section{Introduction: Diophantine approximation on manifolds}
\label{sek1}

Let $n\geq 1$ be a fixed integer.
Let $\Psi:\Z^n\to[0, \infty)$ be a \emph{multivariable approximating function}, i.e. $\Psi$ has the property that
$\Psi(\qq) \rightarrow~0 \text{ as } \|\qq\|:=\max(|q_1|, \ldots, |q_n|) \rightarrow~\infty.$ Further, let $\theta:\R^n\to \R$
be a continuous function. We consider the dual approximation 
problem with respect to the approximation function $\Psi$
and the functional inhomogeneous parameter $\theta$. Concretely, we are concerned with the  set

\begin{equation*}
  \WWW:=\left\{\xx=(x_1,\dots,x_n)\in\R^n:\begin{array}{l}
  |q_1x_1+\cdots+q_nx_n-p-\theta(\xx)|<\Psi(\qq) \\[1ex]
  \text{for} \ \ i.m. \ (p, q_1, \ldots, q_n)\in\Z^{n+1}\}
                           \end{array}
\right\},
\end{equation*}
where  $`i.m.$' stands for `infinitely many'.  A vector $\xx \in \R^n$ will be called \emph{$(\Psi, \theta)$-approximable} if it lies in the set $\WWW$. In the case where the function $\theta$ is constant, the set $\WWW$ corresponds to the familiar inhomogeneous setting and in particular, when $\theta= 0$ the problem reduces to the \emph{homogeneous} setting. We are interested in the `size' of the set $\WWW$ with respect to the  $f$-dimensional Hausdorff measure $\HH^f$ for some dimension function $f$, i.e.
an increasing continuous function $f:\mathbb{R}\to \mathbb{R}$ with $f(0)=0$.
We refer to Subsection \ref{hm} below for a brief introduction to Hausdorff measure.

The most modern result in regards to determining the size of the set $\WWW$ is the following statement due to the contributions of many authors but most importantly due to the works of Schmidt \cite{Schmidt8} and Beresnevich \& Velani \cite{BeresnevichVelani4}.

\begin{thmsch}{\em
Let $\Psi$ be a multivariable approximating function. Let $f$ be a dimension function such that $r^{-n}f(r)\to\infty$ as $r\to 0.$ Assume that $r\mapsto r^{-n}f(r)$ is decreasing and $r\mapsto r^{1-n}f(r)$ is increasing. Fix $\theta\in\R$. Then
\begin{equation*}
  \HH^f( \WWW)=\left\{\begin{array}{cl}
 0 &  {\rm if } \quad\sum\limits_{\qq\in\Z^n\setminus \{\0\}}\|\qq\|^n\Psi(\qq)^{1-n}f\left(\frac{\Psi(\qq)}{\|\qq\|}\right)< \infty.\\[3ex]
 \infty &  {\rm if } \quad \sum\limits_{\qq\in\Z^n\setminus \{\0\}}\|\qq\|^n\Psi(\qq)^{1-n}f\left(\frac{\Psi(\qq)}{\|\qq\|}\right)=\infty \text{ and $\Psi$ is decreasing or $n\geq 2$}.
                                     \end{array}\right.
    \end{equation*}}
\end{thmsch}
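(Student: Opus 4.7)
My plan is a direct covering argument combined with the Hausdorff--Cantelli lemma. Fix a unit cube $Q\subset\R^n$. For each $\qq\in\Z^n\setminus\{\0\}$, the set of $\xx\in Q$ satisfying $|\qq\cdot\xx-p-\theta|<\Psi(\qq)$ for some $p\in\Z$ is a union of $O(\|\qq\|)$ parallel slabs perpendicular to $\qq$, each of thickness $\asymp\Psi(\qq)/\|\qq\|$ and transverse diameter $O(1)$. Covering each slab by balls of radius $r_\qq:=\Psi(\qq)/\|\qq\|$ costs $O(r_\qq^{-(n-1)})$ balls, so the whole union is covered by $O(\|\qq\|^n\Psi(\qq)^{1-n})$ such balls, contributing $O(\|\qq\|^n\Psi(\qq)^{1-n}f(\Psi(\qq)/\|\qq\|))$ to the $f$-mass. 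Summing over $\qq$, the convergence hypothesis together with the Hausdorff--Cantelli lemma gives $\HH^f(\WWW\cap Q)=0$; $\sigma$-subadditivity extends this to $\R^n$. Note that the constant $\theta$ only translates each slab and therefore plays no role in the estimate.

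\textbf{Divergence.} This is the substantial half, and I would split it into a Lebesgue full-measure statement followed by an application of the mass transference principle. Select an auxiliary approximating function $\Phi$ so that the Lebesgue mass of a $\Phi$-slab matches the $f$-mass of a $\Psi$-slab, arranging that divergence of the theorem's series is equivalent to divergence of $\sum_\qq \Phi(\qq)$. Under either monotonicity of $\Psi$ or the hypothesis $n\geq 2$, the inhomogeneous Khintchine--Groshev theorem (Schmidt) then yields that $W_n^\theta(\Phi)$ has full Lebesgue measure in $\R^n$. The family of resonant hyperplanes $R_{\qq,p}:=\{\xx:\qq\cdot\xx=p+\theta\}$ thus forms a ubiquitous system at the scale $\Phi(\qq)/\|\qq\|$, and the Beresnevich--Velani mass transference principle for hyperplanes promotes this Lebesgue statement to $\HH^f(\WWW)=\infty$. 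The two monotonicity hypotheses on $f$, together with $r^{-n}f(r)\to\infty$, are precisely what is needed to place $f$ strictly between the Lebesgue dimension $n$ and the codimension-one regime $n-1$, which is the range in which the hyperplane transference operates.

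\textbf{Main obstacle.} The delicate step is packaging the Lebesgue information as a \emph{local ubiquity} statement with uniform constants, in the form required by the Beresnevich--Velani framework. When $\Psi$ is non-monotonic and $n\geq 2$, one cannot invoke Khintchine--Groshev directly at the relevant scale; instead one must decompose $\Z^n$ dyadically into levels on which $\|\qq\|$ and $\Psi(\qq)$ are essentially constant, verify ubiquity on each level, and then recombine. Once ubiquity is in place, the measure transfer and the recovery of the $f$-divergence from the Lebesgue divergence are essentially mechanical book-keeping.
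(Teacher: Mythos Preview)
The paper does not actually prove Theorem~SBV; it is stated as a background result attributed to Schmidt and to Beresnevich--Velani, with only the remark that ``the convergence case can be settled rather straightforwardly by using the Hausdorff--Cantelli lemma'' while ``the divergence case carries the main substance.'' Your sketch is consistent with those comments and with the cited literature: the convergence half is exactly the covering-plus-Hausdorff--Cantelli argument the paper alludes to, and for divergence the route via the inhomogeneous Khintchine--Groshev theorem followed by the Beresnevich--Velani mass transference/slicing principle for hyperplanes is precisely the content of \cite{BeresnevichVelani4}, which the paper cites as the source. So there is nothing in the paper to compare against beyond that one-line hint, and your outline matches it.

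One small caution on your divergence paragraph: the non-monotonic case $n\geq 2$ does not require the dyadic decomposition you describe as the ``main obstacle.'' Schmidt's theorem already gives the full-measure Lebesgue statement for $W_n^\theta(\Phi)$ without monotonicity when $n\geq 2$, so ubiquity at the correct scale follows directly, and the transference machinery of \cite{BeresnevichVelani4} then applies. The genuine work in that reference is the slicing lemma that converts Lebesgue-full-measure for hyperplane neighbourhoods into $\HH^f=\infty$, not an ad hoc regularisation of $\Psi$.
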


 Note that $\HH^f$ is proportional to the standard Lebesgue measure when $f(r)=r^n$. The convergence case can be settled rather straightforwardly by using the Hausdorff--Cantelli lemma. 
The divergence case carries the main substance, and that is where the condition on the approximating function comes into play. Theorem SBV is very significant as it encompasses many classical landmark results such as Khintchine's theorem, Jarn\'ik's theorem, and Groshev's theorem.

The problem of estimating the size of  the set $\WWW$ becomes more intricate if one restricts $\xx\in\R^n$ to lie on a $k$-dimensional, nondegenerate\footnote{In this context `nondegenerate' means suitably curved, see \cite{Beresnevich3,KleinbockMargulis2} for precise formulations. }, analytic submanifold $\MM\subseteq \R^n$. In this branch of the theory, such a restriction essentially presumes that the components of each point of interest $\xx$ are functionally related. For this reason, the description of problems of this type is often referred to as the theory of \emph{approximation of dependent quantities}. When asking such questions it is natural to phrase them in terms of a suitable measure supported on the manifold, since when $k<n$ the $n$-dimensional Lebesgue measure of $\MM\cap \WWW$ is zero irrespective of $\Psi$. For this reason,  results in the dependent Lebesgue theory (for example, Khintchine--Groshev type theorems for manifolds) are posed in terms of the $k$-dimensional 
Lebesgue measure (=Hausdorff measure) on $\MM$.

In full generality, a complete Hausdorff measure treatment akin to Theorem SBV for manifolds $\MM$  represents a deep open problem in the theory of Diophantine approximation. The problem is referred to as the Generalised Baker-Schmidt Problem (GBSP) inspired by the pioneering work of Baker $\&$ Schmidt \cite{BakerSchmidt}.  Ideally one would want to solve the following problem in full generality.

\begin{GBSP} {\em Let $\MM$ be a nondegenerate submanifold of $\R^n$ with $\dim\MM=k$ and $n \geq 2$. Let $\Psi$ be a multivariable approximating function.  Let $f$ be a dimension function such that $r^{-k}f(r)\to\infty$ as $r\to 0.$ Assume that $r\mapsto r^{-k}f(r)$ is decreasing and $r\mapsto r^{1-k}f(r)$ is increasing. Prove that }

\begin{equation*}
  \HH^f( \WWW\cap\MM)=\left\{\begin{array}{cl}
 0 &  {\rm if } \quad\sum\limits_{\qq\in\Z^n\setminus \{\0\}}\|\qq\|^k\Psi(\qq)^{1-k }f\left(\frac{\Psi(\qq)}{\|\qq\|}\right)< \infty.\\[3ex]
 \infty &  {\rm if } \quad \sum\limits_{\qq\in\Z^n\setminus \{\0\}}\|\qq\|^k\Psi(\qq)^{1-k}f\left(\frac{\Psi(\qq)}{\|\qq\|}\right)=\infty.
                                     \end{array}\right.
\end{equation*}
\end{GBSP}

In fact, this problem is stated in the most idealistic format and solving it in this form is extremely challenging. The main difficulties lie
 in the convergence case and therein constructing a suitable nice cover for the set $ \WWW\cap\MM$. We list the contributions to date to highlight the significance of this problem.

\smallskip

\noindent{\bf Notation.} In the case where the dimension function is of the form $f(r):=r^s$ for some $s < k$, $\HH^f$ is simply denoted as $\HH^s$. Sometimes we will consider functions of the form $\Psi(\qq)=\psi(\|\qq\|)$, and in this case we use $W_n^\theta(\psi)$ as a shorthand for $W_n^\theta(\Psi)$. The function $\psi:\R_{>0} \to \R_{>0}$ here is called a \emph{single-variable approximating function}. The Veronese curve is denoted as $V_n:=\{(x,x^{2},\ldots,x^n):x\in\R\}$.
  By $B_n(\xx, r)$ we mean the ball centred at the point $\xx\in\mathbb{R}^n$ of radius $r$.
For real quantities $A,B$ and a parameter $t$, we write $A \lesssim_t B$ if $A \leq c(t) B$ for a constant $c(t) > 0$ that depends on $t$ only (while $A$ and $B$ may depend on other parameters). 
We write  $A\asymp_{t} B$ if $A\lesssim_{t} B\lesssim_{t} A$.
If the constant $c>0$ depends only on parameters that are constant throughout a proof, we simply write $A\lesssim B$ and $B\asymp A$.
\subsection{Historical progression towards GBSP for Hausdorff measure}

We list first the progress for the Lebesgue measure theory:

\begin{itemize}
\item[1932.] Diophantine approximation on manifolds dates back to the profound conjecture of K. Mahler \cite{Mahler2}, which can be rephrased as the statement that $\HH^1(\WWh\cap V_n)=0$ for $\psi(r)=r^{-\tau}$ with $\tau>n$. 
\item[1965.] Mahler's conjecture was eventually proven in 1965 by Sprind\v zuk \cite{Sprindzuk}, who then conjectured that the same holds when $V_n$ is replaced by any nondegenerate analytic manifold, and $\HH^1$ is replaced by $\HH^k$ where $k$ is the dimension of this manifold. 

\item[1998.] Although particular cases of Sprind\v zuk's conjecture were known, it was not until 1998 that Kleinbock \&
   Margulis \cite{KleinbockMargulis2} established Sprind\v zuk's conjecture in full generality by using dynamical tools based on diagonal flows on homogeneous spaces. This breakthrough result acted as a catalyst for the subsequent progress in this area of research.
   
   \item [1999--2006.] The convergence Lebesgue measure result for $\WWh\cap\MM$ was established independently in \cite{Beresnevich3} and \cite{BKM2} and the divergence case was established in \cite{BBKM}. 
  
   \item [2013.] The first inhomogeneous Lebesgue measure result for the divergence case of $\WWW\cap\MM$ was established very recently in \cite{BBV}.

\end{itemize}

  With regards to the Hausdorff measure or dimension theory for dual approximation on manifolds, 
  the progress has proven difficult.  

\begin{itemize}
\item [1970.] The first major result in this direction relating to approximation of points on the Veronese curve appeared in the landmark paper of Baker $\&$ Schmidt \cite{BakerSchmidt} in which they proved upper and lower bounds for Hausdorff dimension on the Veronese curve. Further, they conjectured that their lower bound is actually sharp,
which was later proven to be true in \cite{Bernik}.
\item [2000.] Dickinson and Dodson \cite{DickinsonDodson2} proved a lower bound for Hausdorff dimension on extremal manifolds for approximating functions 
of the type $\psi(r)=r^{-\tau}$ for any $\tau>n$.
\item [2006.] A lower bound on the $\HH^f$ measure of $\WWh\cap\MM$ was established in \cite{BDV} as a consequence of their ubiquity framework.
\item [2013.] The first ever inhomogeneous result regarding the Hausdorff measure ($\HH^f$ measure) of $\WWW\cap\MM$ for divergence was established in \cite{BBV} but only for the dimension function $f(r)=r^s$ and with a certain convexity condition on the multivariable approximating function $\Psi$. We note that their proof can be extended to a general dimension function $f$ without any hassles. 
\item [2015.] Proving the genuine Hausdorff measure result for convergence for any manifold remained out of reach until recently when the first-named author proved it  in \cite{Hussain} for $\WWh\cap V_2$ i.e. for $\HH^f$ measure on the parabola, for single-variable approximating functions, under some mild assumptions on the dimension function $f$.
\item [2017.] Huang proved \cite{Huang}  the homogeneous convergence result over planar curves, showing that the $\HH^s(\WWs\cap\CC)=0$ for all non-degenerate planar curves $\CC$ if a certain sum converges. Soon after that, Badziahin--Harrap--Hussain \cite{BHH} extended Huang's result to the inhomogeneous setting but still within the 
framework of a single-variable approximating function $\psi$. 

\item [2019.] Very recently, authors of this paper have proved several results regarding the $\HH^f$-measure on non-degenerate planar curves  and Veronese curves in any dimension \cite{HSS2}. In particular, for the $\HH^f$-measure on the parabola,  it is proved that the monotonicity assumption on the multivariable approximating function cannot be removed.  For the single-variable approximating function, along with many other results which improve our understanding of the GBSP on curves, the GBSP for Veronese curves in any dimension $n$ is also discussed. Concretely, we obtain a generalisation of a recent result of Pezzoni \cite{Pezzoni}.
\end{itemize}

The main point of this discussion is that beyond the planar curves hardly anything is known for the convergence case for $\HH^f$-measure for both homogeneous and inhomogeneous settings. We prove the GBSP for Hausdorff measure over hypersurfaces beyond the setting of planar curves, with the additional perk that we can handle non-monotonic multivariable approximating functions.

\begin{remark}
An analogue for simultaneous approximation of the GBSP for Hausdorff measure can easily be formulated for the set
the set 
\begin{equation*}
  \SSS:=\left\{\xx=(x_1,\dots,x_n)\in\R^n: \begin{array}{l}\max\limits_{1\leq i\leq n} 
  |qx_i-p_i-\theta_i(x)|<\Psi(q)\\
  \text{for \emph{i.m.} }\ (p_1, \ldots, p_n, q)\in\Z^n\times \N
                           \end{array}
\right\},
\end{equation*}
and in this regard similar breakthroughs to those outlined above have been made.  Building on results
from \cite{BDV}, a complete homogeneous treatment (for both Lebesgue and Hausdorff measure) of the divergence part of the simultaneous GBSP very recently appeared in Beresnevich's seminal
paper \cite{Beresnevich_Khinchin}. In the convergence case, the most modern results deal with approximation
on planar curves in the homogeneous setting \cite{VaughanVelani} and in the inhomogeneous setting \cite{BVV} for both Lebesgue and Hausdorff measure and for multiple approximating functions \cite{HussainYusupova}. To reiterate, establishing the convergence part of the simultaneous GBSP for arbitrary nondegenerate analytic manifolds remains an open problem. However, very recently, Huang established the GBSP for Hausdorff measure for hypersurfaces \cite{Huang2} in the homogeneous setting by using new estimates on counting the rational points close to the hypersurfaces.
We point out that, in contrast
to the dual approximation, a general formula for the Hausdorff dimension of the set $\SSS\cap\MM$
cannot be expected even in the setting of homogeneous approximation ($\bftheta=\0$) on smooth planar curves, as
soon as the approximation function $\Psi$ decays
sufficiently fast. Indeed, the set of rational
points on the curve plays a substantial role, and  it depends in a sensitive way on the algebraic nature 
of the curve
and might even be empty. This effect has
been illustrated in \cite[Section~2.2]{Beresnevich_Khinchin} and   for an illustration
of this effect for different planar curves, see also \cite{Drutu}
 for 
rational quadrics and \cite{Schleischitz4} for 
algebraic varieties.

\end{remark}
\subsection{Statements and corollaries of main results} We first state some regularity conditions on the manifold $\MM$ and
the dimension function $f$.

\begin{itemize}
\item[(I)] Let $f$ be a dimension function satisfying
 \begin{equation}
\label{fhypothesis}
f(xy) \lesssim x^s f(y) \text{ for all } y < 1 < x
\end{equation}
for some $s < 2(n-2)$.
\end{itemize}
Obviously, (I) is satisfied for the dimension function $f(x) = x^s$ whenever $s < 2(n-2)$. In particular, if $n\geq 3$ then this is true for all $s < n-1$.
Moreover, upon the standard condition on an arbitrary dimension
function $f$ in the GBSP that $f(q)/q^{k}$ (where $k=\dim \MM$) is
decreasing, as soon as $k<2(n-2)$ the condition (I) is again satisfied. To see this
observe that $x>1$ implies $xy>y$ and thus
\[
\frac{f(xy)}{(xy)^{k}}\leq \frac{f(y)}{y^{k}} \quad \Longleftrightarrow \quad
x^{k}f(y)\geq f(xy), 
\]
so we may let $s=k$ if $k<2(n-2)$
(the argument just happens to fail slightly when $k=2(n-2)$).

From now on we will assume the manifold $\MM$ to be a hypersurface. For convenience, we will furthermore assume that $\MM$ is the graph of a smooth map $g:U\to \R$, where $U\subset\R^{n-1}$ is a connected bounded open set. We denote this as $\MM = \Gamma(g)$.

\begin{itemize}
\item[(II)]  Assume that the Hessian $\nabla^2 g$ of the map $g$ satisfies
 \begin{equation}
\label{Hessian}
\HH^f\left(S_\MM \df \{\xx\in U : \nabla^2 g(\xx) \text{ is singular}\}\right)=0.
\end{equation}
\end{itemize}

We discuss the strengths and weaknesses of this condition in the section \ref{sectionfibering}.

\begin{remark}
The condition \eqref{Hessian} can be rewritten in a coordinate-free way using the concept of the \emph{second fundamental form} of a hypersurface. By definition, if $\MM$ is the graph of a function $g$ then the second fundamental form of $\MM$ is the map that sends a point $\yy = (\xx,g(\xx))$ to the bilinear form $$\mathrm{II}_\yy~:~T_\yy \MM \times T_\yy \MM \to \R$$ defined by the formula
\[
\mathrm{II}_\yy\big((\vv,g'(\xx)[\vv]),(\ww,g'(\xx)[\ww])\big) = g''[\vv,\ww].
\]
It can be shown that up to a constant of proportionality, the second fundamental form is independent of the chosen coordinate system (with respect to linear changes of coordinates). Thus, \eqref{Hessian} is equivalent to the coordinate-free condition
\[
\HH^f \big( \{\yy\in\MM : \mathrm{II}_\yy \text{ is singular}\} \big) = 0.
\]
Note that this condition makes sense even if $\MM$ cannot be globally written as the graph of a function, since for each $\yy\in\MM$, $\mathrm{II}_\yy$ can be defined using a coordinate system such that $\MM$ is locally the graph of a function. 
\end{remark}

\begin{theorem}\label{thm1} Let $\Psi$ be a multivariable approximating function. Let $f$ be a dimension function satisfying \text{(I)} and let $g$ be a $C^2$ function satisfying \text{(II)}. Then if $\MM$ is the graph of $g$ and $\theta:\MM\to \R$ is $C^2$, then
\begin{equation}\label{fw}
\HH^f(\WWW\cap\MM)=0
\end{equation}
if the series
\begin{equation}\label{eqcon}
\sum\limits_{\qq\in\Z^n\setminus \{\0\}}\|\qq\|^{n-1}\Psi(\qq)^{2-n}f\left(\frac{\Psi(\qq)}{\|\qq\|}\right)
\end{equation}
converges.

\end{theorem}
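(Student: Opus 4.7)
The plan is to apply the Hausdorff--Cantelli lemma to the natural cover of $\WWW\cap\MM$ by the sets
\[
A_\qq \df \bigcup_{p\in\Z} \{\xx\in\MM : |\qq\cdot\xx-p-\theta(\xx)|<\Psi(\qq)\},
\]
so that it suffices to build, for each $\qq\in\Z^n\setminus\{\0\}$, a ball cover of $A_\qq$ whose total $f$-cost $\sum f(r_i)$ summed over $\qq$ is controlled by the series in \eqref{eqcon}. Writing $\MM=\Gamma(g)$ and parametrising locally by $\xx=(\xx',g(\xx'))$ with $\xx'\in U\subseteq\R^{n-1}$, we introduce the phase function $\phi_\qq(\xx')\df\qq\cdot(\xx',g(\xx'))-\theta(\xx',g(\xx'))$, so the preimage of $A_\qq$ in $U$ is $\phi_\qq^{-1}\big(\Z+(-\Psi(\qq),\Psi(\qq))\big)$; boundedness of $U$ restricts attention to $\asymp\|\qq\|$ relevant values of $p$.

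Condition \eqref{Hessian} lets us discard the singular set $S_\MM$ (which has $\HH^f$-measure zero, hence arbitrarily cheap $f$-covers) and work in a closed subregion where $\nabla^2 g$ is uniformly non-singular. A dyadic splitting of $\Z^n$ by the largest coordinate of $\qq$, together with a change of chart on $\MM$, further reduces us to the case $|q_n|\asymp\|\qq\|$. Since $\theta$ is $C^2$, the full Hessian $\nabla^2\phi_\qq$ equals $q_n\nabla^2 g$ plus a bounded perturbation, so $|\nabla^2\phi_\qq|\asymp\|\qq\|$ for $\|\qq\|$ large enough. Consequently $\phi_\qq$ is a Morse function with boundedly many critical points (the pre-images under the Gauss map of $\pm\qq/\|\qq\|$, adjusted by the $\theta$-correction), and the analysis localises around these points.

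For each relevant $p$, we analyse the slab $\phi_\qq^{-1}(p-\Psi(\qq),p+\Psi(\qq))$ by a dyadic decomposition around each critical point $\xx_0$. At the critical scale $\delta_0\asymp\sqrt{\Psi(\qq)/\|\qq\|}$, the Morse normal form implies that the ball $\{|\xx'-\xx_0|\lesssim\delta_0\}$ is either absorbed into the slab (definite case) or sliced by two quadratic strips (indefinite saddle case); a single ball of radius $\delta_0$ suffices to cover the local piece at an $f$-cost of $f\big(\sqrt{\Psi(\qq)/\|\qq\|}\big)$. Beyond scale $\delta_0$ one has $|\nabla\phi_\qq|\gtrsim\|\qq\|\,|\xx'-\xx_0|$, the slab has transverse thickness $\lesssim\Psi(\qq)/(\|\qq\|\,|\xx'-\xx_0|)$, and summing over dyadic annuli (or simply covering the transversal region by balls of the smallest relevant radius $\Psi(\qq)/\|\qq\|$) yields an $f$-cost of order $(\|\qq\|/\Psi(\qq))^{n-2}\,f(\Psi(\qq)/\|\qq\|)$.

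Summing over $\asymp\|\qq\|$ values of $p$, the total cost for $A_\qq$ is
\[
\lesssim\|\qq\|\Big[f\big(\sqrt{\Psi(\qq)/\|\qq\|}\big)+(\|\qq\|/\Psi(\qq))^{n-2}\,f\big(\Psi(\qq)/\|\qq\|\big)\Big].
\]
Applying \eqref{fhypothesis} with $y=\Psi(\qq)/\|\qq\|<1$ and $x=y^{-1/2}>1$ gives $f(\sqrt{y})\lesssim y^{-s/2}f(y)$; the hypothesis $s<2(n-2)$ then ensures $y^{-s/2}\lesssim y^{-(n-2)}=(\|\qq\|/\Psi(\qq))^{n-2}$ for $y\leq 1$, so both contributions are dominated by $\|\qq\|^{n-1}\Psi(\qq)^{2-n}f(\Psi(\qq)/\|\qq\|)$, matching \eqref{eqcon}. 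Hausdorff--Cantelli then yields \eqref{fw}. The main obstacle I anticipate is executing the dyadic cover near indefinite saddles (where the quadratic strips extend far from $\xx_0$) uniformly in $\qq$, and globalising these local estimates across $\MM\setminus S_\MM$; the precise purpose of \eqref{fhypothesis} is to absorb the critical-scale $\sqrt{\Psi(\qq)/\|\qq\|}$ into the transversal scale $\Psi(\qq)/\|\qq\|$ governing \eqref{eqcon}, with $s<2(n-2)$ being the sharp numerical threshold for this absorption.
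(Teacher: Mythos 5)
Your outline tracks the paper's proof closely: localise away from $S_\MM$ using \eqref{Hessian}, reduce to a compact piece where the Hessian is uniformly nonsingular, identify the (at most one) critical point of the phase function, cover the slab $\{|\phi_\qq - p| < \Psi(\qq)\}$ by dyadic annuli around that point, and use hypothesis \eqref{fhypothesis} with $s < 2(n-2)$ to make the resulting geometric series converge and produce the cost $\|\qq\|^{n-1}\Psi(\qq)^{2-n}f(\Psi(\qq)/\|\qq\|)$. That is precisely the skeleton of Theorem \ref{thm1}'s proof, including the sum over the $\asymp\|\qq\|$ admissible $p$ and the final appeal to Hausdorff--Cantelli.

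The technical execution differs from the paper's in ways worth noting, and a couple of your steps are hand-waves where the paper does something cleaner. First, you keep the phase $\phi_\qq$ unnormalised so that $\nabla^2\phi_\qq\asymp\|\qq\|$, and then invoke the Morse normal form; the paper instead divides by $q_n$ to form $h$ with bounded Hessian, and replaces the Morse lemma with the explicit bi-Lipschitz estimate of Lemma \ref{lemmacover}. The advantage of the paper's version is that the bi-Lipschitz constant is visibly uniform in $\qq$ (it only depends on the compact convex set $F$ of Hessians), whereas a blind appeal to Morse normal form does not obviously give constants uniform in $\qq$---you would need to re-derive that uniformity, which essentially is Lemma \ref{lemmacover}. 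Second, ``boundedly many critical points'' should be ``at most one'': the mean-value/convexity argument (Claim \ref{claim1}) shows $\nabla h$ is injective on the chart, so there is a single $\vv$, and that is what makes the dyadic decomposition clean; ``finitely many by Morse theory'' is not uniformly bounded in $\qq$ without this extra argument. Third, your ``change of chart to reduce to $|q_n|\asymp\|\qq\|$'' is a $\qq$-dependent reparametrisation that is more delicate than you suggest (it requires $\nabla g$ to have the relevant partial bounded away from zero, and uniformity in $\qq$ again); the paper avoids it entirely by staying in a fixed chart and splitting into Case~1 ($\|\rr\|$ bounded, critical point exists) and Case~2 ($\|\rr\|$ large or $q_n=0$, gradient bounded below), each handled directly. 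Finally, your parenthetical ``simply covering by balls of the smallest radius $\Psi(\qq)/\|\qq\|$'' does not reproduce the right cost for small $n$ (it produces an extra logarithm at $n=3$); the dyadic-annulus summation is the correct mechanism and the one the paper uses. None of these is a conceptual error in the approach, but they are genuine gaps in the write-up that the paper's Claim \ref{claim1} and Lemma \ref{lemmacover} are specifically designed to close.
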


Since some
regularity condition as in (II) is inevitable, by our comment
on condition (I) our solution of GBSP for
hypersurfaces can be considered complete as soon as the space 
dimension is at least $n\geq 4$ as then $k=n-1<2(n-2)$.

As stated earlier, the divergence counterpart of our theorem was proved by Badziahin--Beresnevich--Velani \cite{BBV} but for the $s$-dimensional Hausdorff measure and for the multivariable approximating function satisfying the property $\bfP$.  Here, by following the terminology of \cite{BBV}, we say that an approximating function $\Psi$ satisfies \emph{property $\bfP$} if it is of the form
$\Psi(\qq)=\psi(\|\qq\|_{\vv})$ for a monotonically decreasing
function $\psi: \mathbb{R}_{>0}\to \mathbb{R}_{>0}$ 
(single-variable approximation function), 
$\vv=(v_{1},\ldots,v_n)$ with $v_i > 0$ and $\sum_{1\leq i\leq n} v_{i}=n$,
and $\| \cdot \|_{\vv}$ defined as
the quasi-norm $\|\qq\|_{\vv}= \max_i \vert q_{i}\vert^{1/v_{i}}$.

It is to be noted that the techniques used in proving \cite[Theorem 2]{BBV}  also work for the  $f$-dimensional Hausdorff measure situation. Combining our result with \cite[Theorem~2]{BBV}, we obtain a zero-infinity law for the $f$-dimensional Hausdorff measure for the dual inhomogeneous approximation problem on hypersurfaces. 
\begin{corollary}
Let $\theta:\MM\to \R$ be a $C^2$ function, and let $\Psi$
be a multivariable approximating
function satisfying property $\bfP$. Let $f$ be a dimension function satisfying \text{(I)} and let $g$ be a $C^2$ function satisfying \text{(II)}. Then
\begin{equation*}
  \HH^f( \WWW\cap \MM)= \begin{cases}
0 &  {\rm if } \quad \sum\limits_{\qq\in\Z^n\setminus \{\0\}}\|\qq\|^{n-1}\Psi(\qq)^{2-n}f\left(\frac{\Psi(\qq)}{\|\qq\|}\right)< \infty,\\[3ex]
\infty &  {\rm if } \quad \sum\limits_{\qq\in\Z^n\setminus \{\0\}}\|\qq\|^{n-1}\Psi(\qq)^{2-n}f\left(\frac{\Psi(\qq)}{\|\qq\|}\right)=\infty,
\end{cases}
\end{equation*}
where $\MM$ is the graph of $g$.
\end{corollary}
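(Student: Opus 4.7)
The proof is a direct combination of Theorem~\ref{thm1} with the divergence result of Badziahin--Beresnevich--Velani~\cite[Theorem~2]{BBV}, the latter upgraded from $\HH^s$ to $\HH^f$. On the convergence side there is nothing new to do: under hypotheses~(I) and~(II), Theorem~\ref{thm1} already delivers $\HH^f(\WWW\cap\MM)=0$ whenever the series in~\eqref{eqcon} converges, and it does so without any use of property~$\bfP$ at all. That implication is the convergence half of the corollary.

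For the divergence half, my plan is to run the proof of~\cite[Theorem~2]{BBV} essentially verbatim, tracking $\HH^f$ rather than $\HH^s$ at every step. That proof proceeds in two conceptual stages. First, property~$\bfP$ collapses the multivariable approximation problem to an essentially single-variable one via the quasi-norm $\|\cdot\|_{\vv}$; this reduction is purely geometric and measure-theoretically neutral, so it passes through unchanged. Second, a ubiquity argument based on the Beresnevich--Dickinson--Velani framework~\cite{BDV} supplies the Hausdorff-measure lower bound. The framework of~\cite{BDV} is already formulated for an arbitrary dimension function $f$ once the standard monotonicity conditions on $r\mapsto r^{-k}f(r)$ and $r\mapsto r^{1-k}f(r)$ hold, and with $k=n-1$ these follow from the regularity built into~(I).

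The only care required is a bookkeeping one: each weight of the form $r^s$ occurring in the covering and mass-distribution estimates of~\cite{BBV} has to be replaced by $f(r)$, and the resulting multiplicative losses must be shown not to destroy divergence of the relevant series. Hypothesis~\eqref{fhypothesis} is tailored for exactly this purpose, since the quasi-doubling inequality $f(xy)\lesssim x^s f(y)$ for $y<1<x$ absorbs the constants that arise when comparing $f$-weights of nested balls at different scales inside the Cantor-type construction. The output is a measure of positive $\HH^f$-mass supported on $\WWW\cap\MM$, from which $\HH^f(\WWW\cap\MM)=\infty$ follows by the usual localisation argument. Combining the two halves yields the zero-infinity dichotomy asserted in the corollary; the main obstacle, such as it is, is simply to verify that the substitution $r^s\mapsto f(r)$ is consistent across every estimate in~\cite{BBV}, which is routine given~(I).
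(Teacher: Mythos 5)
Your overall approach matches the paper's, which likewise dispatches the convergence half by invoking Theorem~\ref{thm1} directly and the divergence half by citing \cite[Theorem~2]{BBV} together with the remark that its proof carries over to $\HH^f$ and that condition~(II) guarantees $2$-nondegeneracy so that \cite[Theorem~2]{BBV} applies. The paper itself offers no more detail than that, so on structure you are aligned.

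One assertion in your divergence discussion is, however, backwards. You claim that the monotonicity conditions on $r\mapsto r^{-k}f(r)$ and $r\mapsto r^{1-k}f(r)$ (with $k=n-1$) ``follow from the regularity built into~(I).'' Condition~(I) is strictly weaker: the paper's own discussion around~\eqref{fhypothesis} shows that monotonicity of $f(r)/r^{k}$ together with $k<2(n-2)$ \emph{implies}~(I) (take $s=k$), not conversely. A dimension function can satisfy $f(xy)\lesssim x^s f(y)$ for $y<1<x$ while $r^{-k}f(r)$ oscillates and fails to be monotone. So if one insists on deriving the BDV-style monotonicity hypotheses from~(I), the derivation does not exist. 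In fairness, the paper glosses over this too --- it simply asserts that the techniques of \cite{BBV} work for $\HH^f$ without tracking hypotheses --- but you should not present (I) as if it supplies the monotonicity of $r^{-k}f(r)$; either add those monotonicity assumptions explicitly for the divergence half (as in the GBSP formulation) or check that the ubiquity argument goes through with only~(I), which is a nontrivial claim you have not established.
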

Note that property (II) implies that the graph of $g$ is $2$-nondegenerate in the sense of \cite{BBV}, which allows us to apply \cite[Theorem 2]{BBV}.

 We emphasise that, as a consequence of the property $\bfP$, the divergence case assumes monotonicity assumption on the approximating function $\Psi$. However, the convergence case does not require any monotonicity assumption on the approximating function. Considering the case $f(r) = r^s$ yields the following:

\begin{corollary}\label{cor2}
Let $\theta:\MM\to \R$ be a $C^2$ function, and let $\Psi$
be a multivariable approximating
function satisfying property $\bfP$. Fix $s<2(n-2)$, and let $g$ be a $C^2$ function satisfying property \text{(II)} with $f(r) = r^s$. Then
\begin{equation*}
  \HH^s( \WWW\cap \MM)= \begin{cases}
0 &  {\rm if } \quad \sum\limits_{\qq\in\Z^n\setminus \{\0\}}\|\qq\|\left(\frac{\Psi(\qq)}{\|\qq\|}\right)^{s+2-n}< \infty,\\[3ex]
\infty &  {\rm if } \quad \sum\limits_{\qq\in\Z^n\setminus \{\0\}}\|\qq\|\left(\frac{\Psi(\qq)}{\|\qq\|}\right)^{s+2-n}=\infty,
\end{cases}
\end{equation*}
where $\MM$ is the graph of $g$.
\end{corollary}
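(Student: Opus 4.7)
The plan is to deduce Corollary \ref{cor2} as a direct specialization of the preceding corollary to the dimension function $f(r) = r^s$ with $s < 2(n-2)$. Since property $\bfP$ and condition (II) are both included verbatim in the hypotheses here, the only structural assumption I still need to verify is condition (I).

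First, I would verify that the power function $f(r) = r^s$ satisfies (I). For $y < 1 < x$ one has $f(xy) = x^s y^s = x^s f(y)$, so \eqref{fhypothesis} holds with implicit constant equal to $1$, and the admissible exponent in (I) can be taken equal to $s$ because $s < 2(n-2)$ by hypothesis.

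Next, I would simply rewrite the summand of the series appearing in the preceding corollary using the explicit form $f(r) = r^s$:
\[
\|\qq\|^{n-1}\Psi(\qq)^{2-n}\left(\frac{\Psi(\qq)}{\|\qq\|}\right)^{s} = \|\qq\|^{n-1-s}\Psi(\qq)^{s+2-n} = \|\qq\|\left(\frac{\Psi(\qq)}{\|\qq\|}\right)^{s+2-n}.
\]
Thus the series studied in Corollary \ref{cor2} agrees term by term with the one in the preceding corollary, so convergence (resp.\ divergence) of one is equivalent to convergence (resp.\ divergence) of the other.

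With these two observations in place, Corollary \ref{cor2} follows immediately from the preceding corollary. There is no substantive obstacle to overcome; the statement is essentially a reformulation in a cleaner exponent-based form that isolates the dependence on the Hausdorff exponent $s$, and the real content is already packaged in Theorem \ref{thm1} (for convergence) and \cite[Theorem~2]{BBV} (for divergence).
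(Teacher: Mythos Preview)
Your proposal is correct and matches the paper's approach exactly: the paper simply introduces Corollary~\ref{cor2} with the sentence ``Considering the case $f(r) = r^s$ yields the following,'' having already remarked that (I) is obviously satisfied for $f(r)=r^s$ when $s<2(n-2)$. Your write-up is in fact more explicit than the paper's, spelling out both the verification of (I) and the algebraic identity between the two summands.
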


\medskip

There are many benefits of a characterisation of $\WWW\cap \MM$ in terms of Hausdorff measure; one such benefit is that such a characterisation implies a formula for the Hausdorff dimension. In particular, let $\tau_{\Psi}$ be the lower order at infinity of $1/\Psi$, that is,
\[
\tau_\Psi:=\liminf_{t\to\infty} 
\frac{\log(1/\Psi(t))}{\log t},
\qquad \text{where} \quad \Psi(t)= \inf_{\xx\in\R^n: \|\xx\|=t} \Psi(\xx).
\]
Then Theorem \ref{thm1}  implies that for any approximating function $\Psi$ with lower order at infinity $\tau_{\Psi}$, we have
\[
\dim_\HH (\WWW\cap\MM)\leq \frac{\tau_{\Psi}(n-2)+2n-1}{\tau_{\Psi}+1}=n-2+\frac{n+1}{\tau_{\Psi}+1}.
\]
\medskip

\noindent {\bf Acknowledgements.}  The first-named author was supported by the La Trobe University Startup grant. The second-named author was supported by Schroedinger Scholarship J 3824 of the Austrian Science Fund (FWF). The third-named author was supported by the Royal Society Fellowship. Part of this work was carried out when the second- and third-named authors visited La Trobe University. We are thankful to La Trobe University and Australian Mathematical Sciences Institute (AMSI) for travel support. We would like to thank the anonymous referee for several useful comments and suggestions which has improved the clarity and presentation of the paper.

\section{Proof of the main theorem}

For completeness we give below a very brief introduction to Hausdorff measures and dimension. For further details see \cite{Falconer_book2013}.

\subsection{Preliminaries}\label{hm}

Let
$\Omega\subset \R^n$.
 Then for any $0 < \rho \leq \infty$, any finite or countable collection~$\{B_i\}$ of subsets of $\R^n$ such that
$\Omega\subset \bigcup_i B_i$ and $\mathrm{diam} (B_i)\le \rho$ is called a \emph{$\rho$-cover} of $\Omega$.
Let
\[ 
\HH_{\rho}^{f}(\Omega)=\inf \sum_{i} f\left(\diam (B_i)\right),
\]
where the infimum is taken over all possible $\rho$-covers 
$\{B_i\}$ of $\Omega$. The \textit{$f$-dimensional Hausdorff measure of $\Omega$} is defined to be
\[
\HH^f(\Omega)=\lim_{\rho\to 0}\HH_\rho^f(\Omega).
\]
The map $\HH^f:\P(\R^n)\to [0,\infty]$ is a Borel measure. In the case that $f(r)=r^s \;\; (s\geq 0)$, the measure $\HH^f$ is denoted $\HH^s$ and is called \emph{$s$-dimensional Hausdorff measure}. For any set $\Omega\subset \R^n$ one can easily verify that there exists a unique critical value of $s$ at which the function $s\mapsto\HH^s(\Omega)$ ``jumps'' from infinity to zero. The value taken by $s$ at this discontinuity is referred to as the \textit{Hausdorff dimension} of $\Omega$ and is denoted by $\dim_{\HH} \Omega $; i.e.
\[
\dim_\HH \Omega :=\inf\{s\geq 0\;:\; \HH^s(\Omega)=0\}.
\]
A countable collection $\{B_i\}$ is called a \emph{fine cover} of $\Omega$ if for every $\rho>0$ it contains a sub-collection that is a $\rho$-cover of $\Omega$.

\medskip

We state the Hausdorff measure analogue of the famous Borel--Cantelli lemma (see \cite[Lemma 3.10]{BernikDodson}) which will allow us to estimate the Hausdorff measure of certain sets via calculating the Hausdorff $f$-sum of a fine cover.

\begin{lemma}[Hausdorff--Cantelli lemma]\label{bclem}
Let $\{B_i\}\subset\R^n$ be a fine cover of a set $\Omega$ and let $f$ be a dimension function such that
\begin{equation}
\label{fdimcost}
\sum_i f\left(\diam(B_i)\right) \, < \, \infty.
\end{equation}
Then $$\HH^f(\Omega)=0.$$
\end{lemma}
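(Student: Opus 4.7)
The plan is to combine the definition of a fine cover with the convergence of the series $\sum_i f(\diam(B_i))$ to drive $\HH^f_\rho(\Omega)$ to zero as $\rho\to 0$. The key observation is that, by removing finitely many sets from the collection $\{B_i\}$, the tail has arbitrarily small $f$-cost, and at sufficiently small scales the fine cover property forces only tail sets (plus possibly singletons, which contribute nothing since $f(0)=0$) to appear in admissible sub-covers.

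More concretely, I would fix $\epsilon>0$ and, using \eqref{fdimcost}, choose $N$ so large that $\sum_{i>N} f(\diam(B_i)) < \epsilon$. Next I would set
\[
\rho_0 \df \min\{\diam(B_i) : 1\leq i\leq N,\ \diam(B_i)>0\},
\]
with the convention that $\rho_0=+\infty$ if this set is empty. For any $0<\rho<\rho_0$, the fine cover hypothesis yields a sub-collection $\{B_i\}_{i\in J_\rho}$ of $\{B_i\}$ which covers $\Omega$ and whose members each have diameter $\leq \rho$. By choice of $\rho_0$, any index $i\leq N$ with $\diam(B_i)>0$ is automatically excluded from $J_\rho$, so $J_\rho$ is contained in $\{i>N\}\cup\{i\leq N : \diam(B_i)=0\}$. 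Since $f(0)=0$, the degenerate singletons contribute nothing, and hence
\[
\HH^f_\rho(\Omega) \;\leq\; \sum_{i\in J_\rho} f(\diam(B_i)) \;\leq\; \sum_{i>N} f(\diam(B_i)) \;<\;\epsilon.
\]
Letting $\rho\to 0$ gives $\HH^f(\Omega)\leq\epsilon$, and the arbitrariness of $\epsilon$ then yields $\HH^f(\Omega)=0$.

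There is no substantial obstacle here: the argument is essentially a book-keeping exercise combining the monotone convergence definition of $\HH^f$ with the tail-smallness of a convergent series. The only mild subtleties are (i) making sure the fine cover property is used at a scale $\rho$ small enough that no large-diameter $B_i$ from the initial segment $i\leq N$ can creep into $J_\rho$, and (ii) handling potential zero-diameter members of the collection, both of which are dispatched by the definition of $\rho_0$ and the normalisation $f(0)=0$ built into the notion of dimension function.
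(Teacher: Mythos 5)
The paper states this lemma without proof, simply citing \cite[Lemma~3.10]{BernikDodson}, so there is no in-paper argument to compare against. Your proof is correct and self-contained: truncating the tail of the series at index $N$, choosing $\rho_0$ to bar the finitely many sets $B_1,\dots,B_N$ of positive diameter from any $\rho$-cover with $\rho<\rho_0$, and using $f(0)=0$ to discard degenerate members gives $\HH^f_\rho(\Omega)<\epsilon$ for every sufficiently small $\rho$, hence $\HH^f(\Omega)=0$.
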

In what follows we will call the series \eqref{fdimcost} the \emph{$f$-dimensional cost} of the collection $\{B_i\}$. Note that if $\{B_i\}$ is only a cover and not a fine cover, then there is no necessary relation between $f$-dimensional cost and $f$-dimensional Hausdorff measure.
Finally, we recall a very useful property of Hausdorff measures
under Lipschitz maps. For $\HH^s$ measure this property follows from \cite[Proposition 3.1]{Falconer_book2013}, but we give a different proof for $\HH^f$ measure.

\begin{proposition} \label{pro}
Fix positive integers $m,n$, a set $A\subset \mathbb{R}^{m}$, a Lipschitz continuous function $g: \R^{m}\to \R^n$, and a dimension function $f$. Then for $B=g(A)\subset \R^n$ we 
have $\HH^f(B)\lesssim \HH^f(A)$. If $g$ is bi-Lipschitz, then $\HH^f(B)\asymp\HH^f(A)$.
\end{proposition}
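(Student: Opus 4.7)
The plan is to follow the standard Lipschitz--Hausdorff template but insert one combinatorial step to accommodate an arbitrary dimension function $f$. Without loss of generality assume $g$ is $L$-Lipschitz with $L\geq 1$ (the case $L<1$ is immediate from the monotonicity of $f$, since then each image set $g(U_{i}\cap A)$ has smaller diameter than $U_{i}$). Given any $\rho$-cover $\{U_{i}\}$ of $A$, the collection $\{g(U_{i}\cap A)\}$ covers $B=g(A)$ and each of these image sets has diameter at most $L\diam(U_{i})$.

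In the $\HH^{s}$ case one would now invoke $f(Lr)=L^{s}f(r)$, but for a general dimension function there is no reason the ratio $f(Lr)/f(r)$ should remain bounded as $r\to 0$ (for instance, $f(r)=e^{-1/r}$), so a direct estimate fails. My workaround is to re-cover each image set $g(U_{i}\cap A)\subset\R^{n}$ by balls whose diameters are exactly $\diam(U_{i})$ rather than $L\diam(U_{i})$. A standard volume-packing estimate in $\R^{n}$ shows that at most $N=N(L,n)$ such balls are needed, where the constant $N$ depends only on $L$ and the ambient dimension~$n$, and in particular \emph{not} on $\diam(U_{i})$.

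Collecting these replacement covers over all $i$ produces a $\rho$-cover $\{V_{i,k}\}_{i,k}$ of $B$ whose $f$-cost is at most $N\sum_{i}f(\diam(U_{i}))$. Taking the infimum over all $\rho$-covers of $A$ and then letting $\rho\to 0$ gives
\[
\HH^{f}(B)\leq N\,\HH^{f}(A),
\]
which is the required bound $\HH^{f}(B)\lesssim \HH^{f}(A)$. The bi-Lipschitz assertion follows by applying the same inequality in the reverse direction to the Lipschitz map $g^{-1}:B\to A$.

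There is no real obstacle; the only subtlety is ensuring that the combinatorial covering number $N(L,n)$ is independent of the scale $\diam(U_{i})$, which is the whole point of switching from the analytic factor $f(Lr)/f(r)$ (uncontrollable for general $f$) to the combinatorial factor $N(L,n)$. This decoupling is what allows the argument to go through for an arbitrary dimension function without any doubling hypothesis on $f$.
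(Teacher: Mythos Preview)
Your proof is correct and is essentially the same as the paper's own argument: both replace the uncontrollable factor $f(Lr)/f(r)$ by a combinatorial covering number $N=N(L,n)$, re-covering each image set $g(U_i\cap A)$ by at most $N$ sets of diameter $\leq \diam(U_i)$, and then pass to the infimum and the limit $\rho\to 0$. The paper's version is slightly terser and does not separate out the case $L<1$, but the idea and execution coincide.
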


\begin{proof}
Let $L$ be a Lipschitz constant for $g$. Since we work in $\mathbb{R}^n$, there exists a constant $N$ such that for all $\rho > 0$, every set of diameter $L \rho$ can be covered by at most $N$ sets of diameter at most $\rho$. Now if $\{B_i : i\in\N\}$ is a $\rho$-cover of $A$, then the collection $$\{B_{i,j} : i\in\N,\; j = 1,\ldots,N\}$$ is a $\rho$-cover of $B$, where for each $i$, $$\{B_{i,j} : j = 1,\ldots,N\}$$ is a cover of $g(B_i)$ by sets of diameter $\leq\diam(B_i)$. Comparing the costs of these covers, taking the infimum over all such covers, and finally taking the limit as $\rho\to 0$, one sees that $\HH^f(B) \leq N \HH^f(A)$.
\end{proof}

\subsection{Proof of Theorem \ref{thm1}}

Recall that  $U\subset\R^{n-1}$ is a connected bounded open set and that $$S_\MM \df \{\xx\in U : \nabla^2 g(\xx) \text{ is singular}\}.$$
First note that $U\setminus S_\MM$ can be covered by countably many small balls $B_i$ such that $\nabla^2 g(\xx) \in F_i$ for all $\xx\in B_i$, where $F_i$ is a compact convex set of matrices with nonzero determinant. If $\HH^{f}(\WWW \cap \Gamma(g\given B_i)) = 0$ for all such $B_i$, then it follows from \eqref{Hessian} and the $\sigma$-subadditivity of measures that $$\HH^{f}(\WWW\cap \Gamma(g)) = 0.$$ Here $\Gamma(g\given B_i)$ is the graph of $g$ restricted to $B_i$, or equivalently $$\Gamma(g \given B_i)  =  (B_i \times \R ) \cap \Gamma(g).$$ Thus, we can without loss of generality assume that $U$ is a ball and that $\nabla^2 g(\xx)\in F$ for all $\xx\in U$, where $F$ is a compact convex set of matrices with nonzero determinant.

For any $p\in\Z$ and $\qq\in \Z^n$, we will bound the size of the set 
\[
S(p,\qq) = S_{\Psi,\theta}(p,\qq) = \{\xx\in K : |\qq\cdot (\xx,g(\xx)) - p - \theta(\xx)| < \Psi(\qq)\},
\]
where $K$ is a compact subset of $U$. First suppose that $q_n\neq 0$. Write $\qq = q_n(\rr,1)$ for some $\rr\in\Q^{n-1}$, and let $\rho = \Psi(\qq)/|q_n|$. Then
\[
S(p,\qq) = \big\{\xx\in K : \big|\rr\cdot\xx + g(\xx) - \tfrac{p+\theta(\xx)}{q_n}\big| < \rho\big\}.
\]
For fixed $p$ and $\qq$, define a function $h = h_{p,\qq}: \R^{n-1}\to \R$ by 
$$h(\xx) = \rr\cdot\xx + g(\xx) - \frac{p+\theta(\xx)}{q_n},$$ for
$\rr$ induced by $\qq$ as above. Finally, if $q_n = 0$, then we instead let
$$
\rr = (q_1,\ldots,q_{n-1}),\qquad
h(\xx) = \rr\cdot\xx - p - \theta(\xx),\qquad
\rho = \Psi(\qq).$$
Note that either way we have
\[
S(p,\qq) = \{\xx\in K : |h(\xx)| < \rho\}.
\]
\begin{claim}
\label{claim1}
For all but finitely many $\qq$, either there exists $\vv\in\R^{n-1}$ such that $$\|\nabla h(\xx)\| \asymp \|\xx-\vv\|\quad \text {for all }\quad \xx\in U, $$ or $$\|\nabla h(\xx)\| \asymp \|(\rr,1)\|\quad \text{for all}\quad \xx\in K.$$ Either way, we have $\|\nabla^2 h(\xx) \|\lesssim 1$ for all $\xx\in U$.
\end{claim}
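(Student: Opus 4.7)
The plan is to compute $\nabla h$ and $\nabla^2 h$ explicitly, and then split on the sizes of $|q_n|$ and $\|\rr\|$. A direct computation gives
\[
\nabla h(\xx)=\rr+\nabla g(\xx)-\nabla\theta(\xx)/q_n,\qquad \nabla^2 h(\xx)=\nabla^2 g(\xx)-\nabla^2\theta(\xx)/q_n
\]
when $q_n\ne 0$ (with the obvious simplifications when $q_n=0$), and in either case the bound $\|\nabla^2 h(\xx)\|\lesssim 1$ is immediate from $g,\theta\in C^2(\overline U)$ and $|q_n|\geq 1$, so the real work is the dichotomy for $\nabla h$.

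If $\|\rr\|\geq R_0:=4(\|\nabla g\|_\infty+\|\nabla\theta\|_\infty)+1$, the reverse triangle inequality gives $\|\nabla h(\xx)-\rr\|\leq R_0/4$ on $U$ and hence $\|\nabla h(\xx)\|\asymp\|\rr\|\asymp\|(\rr,1)\|$, settling the second alternative. The remaining $\qq$ split into two subfamilies: either $q_n=0$ with $\|\rr\|<R_0$ (and then $\|\qq\|=\|\rr\|$ is bounded, so only finitely many $\qq$), or $q_n\neq 0$ with $\|\rr\|<R_0$. In the latter, writing $\|\qq\|=|q_n|\|(\rr,1)\|$ shows that only finitely many $\qq$ satisfy in addition $|q_n|\leq Q_0$ for any fixed threshold $Q_0$. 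Thus, up to finitely many exceptions, I may assume $q_n\neq 0$, $\|\rr\|<R_0$, and $|q_n|\geq Q_0$.

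In this main regime the strategy is to exploit that the covering $\{B_i\}$ preceding the claim can be refined to make $F$ lie in an arbitrarily small neighbourhood of a single invertible matrix $M_0$; taking $Q_0$ large enough that $\|\nabla^2\theta\|_\infty/Q_0$ is also small, $\nabla^2 h(\xx)$ remains in a compact convex subset $F'\subset\mathrm{GL}_{n-1}(\R)$ whose operator norms and inverse norms are bounded uniformly in $\qq$. The mean value theorem on the convex ball $U$ then writes $\nabla h(\xx)-\nabla h(\yy)=A(\xx,\yy)(\xx-\yy)$ with integrated Hessian $A(\xx,\yy)\in F'$, so that $\nabla h$ is bi-Lipschitz on $U$ with constants depending only on $F'$.

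The vector $\vv$ is finally chosen as the zero of $\nabla h$. If $0\in\nabla h(U)$, take $\vv$ to be the unique preimage; bi-Lipschitzness immediately yields $\|\nabla h(\xx)\|=\|\nabla h(\xx)-\nabla h(\vv)\|\asymp\|\xx-\vv\|$. When $0\notin\nabla h(U)$---as can genuinely happen, since $-\rr\in\Q^{n-1}$ may lie just outside $(\nabla g-\nabla\theta/q_n)(U)$---$\vv$ must be produced outside $U$ by first extending $\nabla h$ to a globally bi-Lipschitz map $\widetilde{\nabla h}\colon\R^{n-1}\to\R^{n-1}$, either via Tukia's bi-Lipschitz extension theorem or by an explicit affine prolongation past $\partial U$ exploiting the closeness of $F'$ to $\{M_0\}$, and then setting $\vv:=(\widetilde{\nabla h})^{-1}(0)$. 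The hardest part is precisely this extension step: the bi-Lipschitz constants of the extension have to be shown to depend only on $F'$ (and hence only on $g$, $\theta$, and $U$), so that the comparison $\|\nabla h(\xx)\|\asymp\|\xx-\vv\|$ is uniform in $\qq$ as the claim requires.
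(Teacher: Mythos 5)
There is a genuine gap, and it stems from trying to force the first alternative of the dichotomy in a case where the second alternative is meant to apply. In the regime $\|\rr\|$ bounded, $|q_n|$ large, you correctly deduce that $\nabla h$ is bi-Lipschitz on $U$ with constants uniform in $\qq$. But when $0\notin\nabla h(U)$ you insist on manufacturing a $\vv$ \emph{outside} $U$, which forces a bi-Lipschitz extension of $\nabla h$ to all of $\R^{n-1}$. You flag this extension as ``the hardest part'' and do not carry it out, and indeed it is not a matter of citing Tukia off the shelf: the bi-Lipschitz Schoenflies/extension problem for embeddings of a ball into $\R^{n-1}$ is genuinely subtle once $n-1\geq 3$ (Tukia's theorem is planar). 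The ``affine prolongation'' variant could in principle be made to work because you have arranged $\nabla^2 h$ to lie near a single invertible matrix $M_0$, but then you also need to justify the further refinement of the cover $\{B_i\}$ to shrink $F$ to a small neighbourhood of $M_0$, another modification of the setup the claim rests on, and the construction is still left as a sketch.

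The paper avoids all of this. Its dichotomy is designed so that the case $0\notin\nabla h(U)$ falls under the \emph{second} alternative even when $\|\rr\|$ is bounded: if $\nabla h(\xx)\neq \0$ for all $\xx\in U$, then by the uniform bi-Lipschitz estimate and the fact that $K$ lies at positive distance $d$ from $\partial U$, each $\xx\in K$ satisfies $\nabla h\big(B(\xx,d)\big)\supset B\big(\nabla h(\xx),d/L\big)$, so $0\notin\nabla h(U)$ forces $\|\nabla h(\xx)\|\geq d/L\gtrsim 1$ on $K$; combined with $\|\nabla h(\xx)\|\lesssim\|(\rr,1)\|\asymp 1$ this gives $\|\nabla h(\xx)\|\asymp\|(\rr,1)\|$ directly, no extension needed. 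In short, the paper only exhibits a $\vv$ when $\nabla h$ already has a zero inside $U$, and otherwise proves the second alternative; you should do the same rather than extend $\nabla h$ past $\partial U$. The rest of your argument (computation of $\nabla h,\nabla^2 h$, the trichotomy on $|q_n|$ and $\|\rr\|$, the mean-value bi-Lipschitz estimate) matches the paper and is fine.
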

\begin{proof}
Fix $\epsilon > 0$ such that the $\epsilon$-neighborhood of $F$, which we will denote by $\NN(F,\epsilon)$, consists of matrices whose determinant has absolute value $\geq \epsilon$. Assume first that $|q_n|$ is large enough so that $$\|\nabla^2 \theta(\xx)\| \leq \epsilon |q_n| \quad \text{for all} \quad \xx\in U.$$ Then for all $\xx\in U$, we have $\nabla^2 h(\xx) \in \NN(F,\epsilon)$. Now, for all $\xx,\yy\in U$ we have $$\nabla h(\yy) - \nabla h(\xx) = A(\yy - \xx),$$ where $$A = \int_0^1 \nabla^2 h(\xx + t (\yy - \xx)) \dee t.$$  Note that $A\in \NN(F,\epsilon)$ by the convexity of $F$. It follows that $$\|\nabla h(\yy) - \nabla h(\xx)\| \asymp \|\yy - \xx\|.$$ 
If $\nabla h(\vv) = \0$ for some $\vv\in U$, we are done, so suppose that $\nabla h(\xx) \neq \0$ for all $\xx\in U$. Then $\|\nabla h(\xx)\| \gtrsim 1$ for all $\xx\in K$. Fix such an $\xx$. Since $\|\nabla g(\xx)\|,\|\nabla \theta(\xx)\| \lesssim 1$, and $$\nabla h(\xx) = \rr + \nabla g(\xx) - \frac1{q_n} \nabla \theta(\xx),$$ it follows that $\|\nabla h(\xx)\| \asymp \|(\rr,1)\|$.

On the other hand, if $\|\rr\|$ is sufficiently large, then for all $\xx\in U$, the equation
\begin{equation}
\label{nablah}
\nabla h(\xx) = \begin{cases}
\rr + \nabla g(\xx) - \frac1{q_n} \nabla \theta(\xx) & \text{if $q_n \neq 0$}\\
\rr - \nabla\theta(\xx) & \text{if $q_n = 0$}
\end{cases}
\end{equation}
automatically implies that $\|\nabla h(\xx)\| \asymp \|(\rr,1)\|$. So the only way the claim can fail to hold is if $|q_n|$ and $\|\rr\|$ are both bounded, and there are only finitely many $\qq\in\Z^n$ such that this is true.

Finally, since $\nabla^2 g(\xx)$ and $\nabla^2\theta(\xx)$ are both bounded independent of $\xx$, so is
\[
\nabla^2 h(\xx) = \begin{cases}
\nabla^2 g(\xx) - \frac1{q_n}\nabla^2 \theta(\xx) & \text{if $q_n \neq 0$}\\
-\nabla^2 \theta(\xx) & \text{if $q_n = 0$}.
\end{cases}
\qedhere\]
\end{proof}

We now split the proof according to which of the two cases in Claim \ref{claim1} holds:\\

{\bf Case 1:} The point $\vv$ described in the claim exists. Note that by \eqref{nablah}, this implies that $\|\rr\| \lesssim 1$ and thus that $\max(|q_n|,1) \asymp \|\qq\|$. 

\begin{lemma}
\label{lemmacover}
Let $\phi: U \subset \R^{n-1}\to \R$ be a $C^2$ function. Fix $\alpha>0$, $\delta>0$, and $\xx\in U$ such that $B_{n-1}(\xx,\alpha) \subset U$.
There exists a constant $C > 0$ depending only on $n$ such that if
\begin{equation}\label{ine}
\|\nabla \phi(\xx)\| \geq C \alpha \sup_{\zz\in U} 
\|\nabla^2 \phi(\zz)\|,
\end{equation}
then the set $$S(\phi,\delta) = \{\yy\in B_{n-1}(\xx,\alpha) : |\phi(\yy)| < \|\nabla \phi(\xx)\| \delta\}$$ can be covered by $\asymp (\alpha/\delta)^{n-2}$ balls of radius $\delta$.
\end{lemma}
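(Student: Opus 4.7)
The plan is to exploit hypothesis \eqref{ine}: when $\|\nabla\phi(\xx)\|$ dominates $\alpha\sup_U\|\nabla^2\phi\|$, the function $\phi$ behaves essentially affinely on $B_{n-1}(\xx,\alpha)$, so that $S(\phi,\delta)$ lies in a slab of thickness $\asymp\delta$ inside the ball of radius $\alpha$, and such a slab is easily covered by $\asymp(\alpha/\delta)^{n-2}$ balls of radius $\delta$.

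First I would apply the mean value inequality to the vector field $\nabla\phi$: for every $\yy\in B_{n-1}(\xx,\alpha)$,
$$\|\nabla\phi(\yy)-\nabla\phi(\xx)\|\leq\alpha\sup_{\zz\in U}\|\nabla^2\phi(\zz)\|\leq\tfrac{1}{C}\|\nabla\phi(\xx)\|.$$
Taking $C$ large enough (e.g.\ $C\geq 4$), this forces $\|\nabla\phi(\yy)\|$ to be comparable to $\|\nabla\phi(\xx)\|$ on the whole ball, and the unit vector $\nabla\phi(\yy)/\|\nabla\phi(\yy)\|$ to lie within a small fixed angle of $\uu\df\nabla\phi(\xx)/\|\nabla\phi(\xx)\|$. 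In particular $\nabla\phi(\yy)\cdot\uu\gtrsim\|\nabla\phi(\xx)\|$ throughout $B_{n-1}(\xx,\alpha)$.

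Next, I would parametrise the ball by lines parallel to $\uu$, writing $\yy=\ww+t\uu$ with $\ww\in\uu^{\perp}$. Along each such line, $t\mapsto\phi(\ww+t\uu)$ has derivative $\gtrsim\|\nabla\phi(\xx)\|$, and hence is strictly monotone with slope comparable to $\|\nabla\phi(\xx)\|$. It follows that the set of $t$ for which $|\phi(\ww+t\uu)|<\|\nabla\phi(\xx)\|\delta$ is an interval of length $\lesssim\delta$. Therefore $S(\phi,\delta)$ is contained in a slab of thickness $c\delta$ (for some $c$ depending only on $n$) around the level hypersurface $\{\phi=0\}$, intersected with $B_{n-1}(\xx,\alpha)$.

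The last step is a standard geometric covering argument: project this slab orthogonally onto the hyperplane $\uu^{\perp}$ through $\xx$; the image is contained in an $(n-2)$-dimensional ball of radius $\alpha$, which may be covered by $\lesssim(\alpha/\delta)^{n-2}$ balls of radius $\delta/(2c)$. The preimage of each such $(n-2)$-dimensional ball is a cylindrical tube of radius $\delta/(2c)$ and height $c\delta$ in the $\uu$-direction, hence sits inside a single ball of radius $\delta$ in $\R^{n-1}$. I expect no substantive obstacle here beyond bookkeeping the dimensional constants so that the final covering radius is exactly $\delta$; this is routine under the $\asymp$ convention, and the lower bound (if one wishes the full equivalence) follows by considering a linear model of $\phi$, where the slab has volume $\asymp\alpha^{n-2}\delta$ and hence requires at least $\asymp(\alpha/\delta)^{n-2}$ balls.
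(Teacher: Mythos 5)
Your argument is correct in substance and reaches the same conclusion, but it runs along a different track from the paper's. The paper introduces the map $\Phi(\yy) = (\kappa y_1,\ldots,\kappa y_{n-2},\phi(\yy))$, uses the hypothesis \eqref{ine} together with the mean value inequality to show $\nabla\Phi$ stays in a small ball of invertible matrices around $\kappa I_{n-1}$, concludes $\Phi$ is bi-Lipschitz with controlled constants, and then observes $S(\phi,\delta)$ is the $\Phi$-preimage of a flat rectangular slab, to which the standard covering count applies. You instead work directly with the geometry: establish that $\nabla\phi$ is nearly parallel to $\uu=\nabla\phi(\xx)/\|\nabla\phi(\xx)\|$ throughout the ball, foliate by lines parallel to $\uu$, show each fibre of $S(\phi,\delta)$ is an interval of length $\lesssim\delta$, and then project. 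Both approaches hinge on the same gradient estimate; the paper's change of variables is tidier bookkeeping-wise, while your version is more elementary and makes the geometry visible.

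There is one place where your write-up is imprecise and should be tightened, though the gap is easily filled from estimates you already have. You write that ``the preimage of each such $(n-2)$-dimensional ball is a cylindrical tube of radius $\delta/(2c)$ and height $c\delta$ in the $\uu$-direction.'' As stated this is false: the preimage under orthogonal projection of an $(n-2)$-ball is an infinite cylinder parallel to $\uu$. What you need is that the \emph{intersection of $S(\phi,\delta)$ with that cylinder} has extent $\lesssim\delta$ in the $\uu$-direction, and for that it is not enough that each individual fibre $\{t : \ww + t\uu \in S(\phi,\delta)\}$ has length $\lesssim\delta$; one must also show those intervals do not drift in $t$ as $\ww$ ranges over a ball of radius $\delta/(2c)$. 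This follows because the level sets of $\phi$ inside $B_{n-1}(\xx,\alpha)$ are graphs over $\uu^{\perp}$ of small slope: since $\nabla\phi(\xx)=\kappa\uu$ with $\kappa=\|\nabla\phi(\xx)\|$, the mean value bound $\|\nabla\phi(\yy)-\nabla\phi(\xx)\|\leq\kappa/C$ gives $|\nabla\phi(\yy)\cdot\uu|\gtrsim\kappa$ and $\|\nabla\phi(\yy) - (\nabla\phi(\yy)\cdot\uu)\,\uu\|\leq\kappa/C$, so the slope is $\lesssim 1/C$. With that observation in hand the covering step closes as you intended. (This is precisely the content that the paper's bi-Lipschitz $\Phi$ packages automatically.) The aside on the lower bound for the number of balls is not needed for the application and in fact does not hold in full generality — $S(\phi,\delta)$ can be empty — so it is better read as $\lesssim$ rather than $\asymp$, as in the paper's own usage.
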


\begin{proof}
	Without loss of generality we can assume $\xx=\0\in U$ since translations do not affect the claim. 
For simplicity and the ease of notation let $\kappa:=\|\nabla \phi(\0)\|$.  Clearly $\kappa>0$ as otherwise
by assumption of the lemma
$\nabla^{2}\phi$ vanishes on $U$, that is, the set $S(\phi, \delta)$ is empty and hence the statement of the lemma is automatically satisfied. 
By rotating the coordinate system we assume that $\nabla \phi(\0)=\kappa\ee_{n-1}$, where $\ee_{n-1}$ denotes the $(n-1)$-st canonical base vector.
 Now consider the map 
 $$\Phi:  B_{n-1}(\0,\alpha)\to \R^{n-1}$$ 
 defined by the formula $$\Phi(\yy) = (\kappa y_1,\ldots,\kappa y_{n-3},\kappa y_{n-2},\phi(\yy)),$$
 for $\yy=(y_{1},\ldots,y_{n-1})$. We have $$\nabla\Phi(\0) = R:= \kappa I_{n-1}$$ where $I_{n-1}$ denotes the 
$(n-1)\times (n-1)$ identity matrix.

On the other hand
\[
\sup_{\zz\in B_{n-1}(\0,\alpha)} \|\nabla^2\Phi(\zz)\| = \sup_{\zz\in B_{n-1}(\0,\alpha)} \|\nabla^2 \phi(\zz)\| \leq \frac{\|\nabla\Phi(\0)\|}{C\alpha}= \frac{\kappa}{C\alpha}.
\]
Denote by $B(R,\kappa/(C\alpha))$ to be the set of $(n-1)\times (n-1)$ matrices whose entries differ by at most $\kappa/(C\alpha)$ from $R$ in terms of the Euclidean norm
on $\mathbb{R}^{(n-1)^{2}}$. Since $B_{n-1}(\0,\alpha)$ has diameter
$2\alpha$,
it follows from the mean value inequality applied to the gradient $\nabla\Phi$
that $$\nabla\Phi(\zz) \in B(R,2\alpha\cdot \kappa/(C\alpha))=B(R,2\kappa/C)\quad \text{for all}\quad  \zz\in B_{n-1}(\0,\alpha).$$ Thus by a further usage of the mean value inequality
applied to $\Phi$, for all $\yy,\zz\in B_{n-1}(\0,\alpha)$ there exists $A_{\yy,\zz}\in B(R,2\kappa/C)$ depending
	on $\yy,\zz$, such that $$\Phi(\zz) - \Phi(\yy) = A_{\yy,\zz}(\zz-\yy).$$ 
	Since the set of invertible $(n-1)\times (n-1)$ matrices
forms an open subset of $\mathbb{R}^{(n-1)\times (n-1)}$ and 
$\kappa>0$, so it follows from the Inverse Function Theorem
that if $C$ is sufficiently large, then $B(R,2\kappa/C)$ consists of
invertible matrices. Hence the map 
$\Phi$ is a bi-Lipschitz map with a uniform bi-Lipschitz constant. Notice that the set $S(\phi,\delta) $ can now be rewritten as
\[
S(\phi,\delta) = \Phi^{-1}\big((\0,\alpha)^{n-2}\times (-\delta\kappa,\delta\kappa)\big)
\]
and it is clear that this set can be covered 
by $\asymp \kappa (\delta/\alpha)^{-(n-2)}=\kappa(\alpha/\delta)^{n-2}$ balls 
of radius $\delta$, with the implied constant depending only on
the bi-Lipschitz constant. Since $\kappa$ is fixed the proof is finished.
\end{proof}

Now fix $k\in \Z$ and consider the annulus 
$$A_k = B_{n-1}(\vv,2^{-k}) \setminus B_{n-1}(\vv,2^{-(k+1)}).$$ 
Observe that the family $(A_k)_{k\in\Z}$ forms a partition of $\R^{n-1}\setminus \{\vv\}$,
and when restricting to $K$ it suffices to take into account
indices $k\geq k_{0}$ for some absolute constant $k_0$.
For all $\xx\in A_k\cap K$, 
we have

\[
\|\nabla h(\xx)\| \asymp \|\xx - \vv\| \asymp 2^{-k}
\]
and thus if $\epsilon > 0$ is sufficiently small (depending on the constant $C$ appearing in Lemma \ref{lemmacover}),  then we have
\[
\|\nabla h(\xx)\| \geq C (\epsilon 2^{-k}) \sup_{\zz\in K} \|\nabla^2 h(\zz)\|.
\]
Where we have used the claim \ref{claim1} that  $\|\nabla^2 h(\zz) \|\lesssim 1$ for all $\zz\in U$. Letting $\alpha = \epsilon 2^{-k}$, $\delta = \rho/\|\nabla h(\xx)\| \asymp 2^k \rho$, and $\phi=h$, from Lemma \ref{lemmacover}
we see that $$S(h,\delta) = S(p,\qq)\cap B_{n-1}(\xx,\epsilon 2^{-k})$$ 
can be covered by $\asymp_{\epsilon} (2^{-2k}/\rho)^{n-2}$ 
balls of radius $\asymp 2^k \rho$. Thus,
\[
\HH^f_\infty \big(S(p,\qq)\cap B_{n-1}(\xx,\epsilon 2^{-k})\big) \lesssim \left(\frac{2^{-2k}}{\rho}\right)^{n-2} f\big(2^k \rho\big).
\]
Now, there exists a universal constant $N$ depending on $n$ such that $A_k$ can be covered by $N$ balls of radius $\epsilon 2^{-k}$ centred in $A_k$. Since the above estimate holds
for any $\xx\in A_{k}$, multiplying
the implied constant above by $N$ we obtain 
\[
\HH^f_\infty \big(S(p,\qq)\cap A_k\big) \lesssim \left(\frac{2^{-2k}}{\rho}\right)^{n-2} f\big(2^k \rho\big),
\]
and summing over $k$ gives
\[
\HH^f_\infty \big(S(p,\qq)\big) \lesssim \sum_{k\geq k_0} \left(\frac{2^{-2k}}{\rho}\right)^{n-2} f\big(2^k \rho\big),
\]
where $k_0\in\Z$ is the smallest integer such that $A_{k_0}\cap K\neq \emptyset$. Now, by \eqref{fhypothesis}, we have
\[
f\big(2^k \rho\big) \lesssim 2^{sk} f(\rho)
\]
and thus
\begin{align*}
\HH^f_\infty \big(S(p,\qq)\big)
&\lesssim \rho^{-(n-2)} f(\rho) \sum_{k\geq k_0} 2^{-k[2(n-2)-s]} \noreason\\
&\asymp \rho^{-(n-2)} f(\rho) \since{$s < 2(n-2)$} \\
& = F(\rho) \note{where $F(x) = x^{-(n-2)} f(x)$}\\
& = F\left(\frac{\Psi(\qq)}{\max(|q_n|,1)}\right)\\ &\asymp F\left(\frac{\Psi(\qq)}{\|\qq\|}\right). \noreason
\end{align*}

{\bf Case 2:} No such point $\vv$ exists, and thus $\|\nabla h(\xx)\| \asymp \|(\rr,1)\|$ for all $\xx\in K$. If $\epsilon > 0$ is sufficiently small, then
\[
\|\nabla h(\xx)\| \geq C \epsilon \sup\|\nabla^2 h\|
\]
for all $\xx\in K$. Applying Lemma \ref{lemmacover} with $\alpha = \epsilon \asymp_\epsilon 1$, $$\delta = \rho/\|\nabla h(\xx)\| \asymp \rho/\|(\rr,1)\|,$$ and $\phi=h$, we see that 
$S(p,\qq) \cap B_{n-1}(\xx,\epsilon)$
 can be covered by $\asymp_\epsilon (\|(\rr,1)\|/\rho)^{n-2}$ balls of radius $\asymp \rho/\|(\rr,1)\|$. So
\begin{align*}
\HH^f_\infty \big(S(p,\qq)\big) &\lesssim \left(\frac{\|(\rr,1)\|}{\rho}\right)^{n-2}  f\left(\frac{\rho}{\|(\rr,1)\|}\right) \\ & = F\left(\frac{\Psi(\qq)}{\max(|q_n|,1)\cdot\|(\rr,1)\|}\right) \\ &\asymp F\left(\frac{\Psi(\qq)}{\|\qq\|}\right).
\end{align*}
{\bf Conclusion.} Combining the two cases, we have shown that for all but finitely many $\qq \in \Z^n$, we have \[
\HH^f_\infty \big(S(p,\qq)\big) \lesssim F\left(\frac{\Psi(\qq)}{\|\qq\|}\right) \text{ for all } p\in\Z.
\]
Now since
\[
E_K = \{\xx\in K : (\xx,g(\xx)) \in \WWW\} = \limsup_{\qq\to \infty} \bigcup_{p\in\Z} S(p,\qq),
\] 
we can make the transition from 
$\HH^f_\infty$ to $\HH^{f}$ by using 
the Hausdorff--Cantelli lemma \ref{bclem}:
\[
\HH^{f}(E_K) = 0 \qquad \text{ if } \qquad \sum_{\qq\in \Z^n\setminus \{\0\}} 
\sum_{\substack{p\in \Z \\ S(p,\qq) \neq \smallemptyset}} F\left(\frac{\Psi(\qq)}{\|\qq\|}\right) < \infty.
\]
Now for each $\qq\in \Z^n$ 
the number of $p\in\Z$ such that $S(p,\qq) \neq \emptyset$ 
is $\lesssim \|\qq\|$ by the compactness of $K$, 
so the above series can be estimated by
\begin{align*}
\sum_{\qq\in \Z^n\setminus \{\0\}} \sum_{\substack{p\in \Z \\ S(p,\qq) \neq \smallemptyset}} F\left(\frac{\Psi(\qq)}{\|\qq\|}\right) & \lesssim \sum_{\qq\in \Z^n\setminus \{\0\}} \|\qq\| F\left(\frac{\Psi(\qq)}{\|\qq\|}\right)
\\ &=\sum\limits_{\qq\in\Z^n\setminus \{\0\}}\|\qq\|^{n-1}\Psi(\qq)^{2-n}f\left(\frac{\Psi(\qq)}{\|\qq\|}\right).
\end{align*}
We recognise the right hand side sum
as the series \eqref{eqcon}, which converges by assumption. 
So $\HH^f(E_K)=0$, 
and since the map $G(\xx) = (\xx,g(\xx))$ is Lipschitz on $K$, it follows from Proposition~\ref{pro}
that $$\HH^f(G(E_K)) = \HH^f(\WWW\cap \Gamma(g\given K)) = 0.$$ Since $K\subset U$ was arbitrary, this implies \eqref{fw}.

\section{Singular Hessian condition}\label{sectionfibering}

In this section we characterise the Hessian condition on the function $g$ i.e. the condition (II). First we discuss this condition on $g$ in form of three examples. We recall that $g:U\to\R$ is a $C^2$ function, where $U\subset\R^{n-1}$ is a connected bounded open set, that $\MM$ is the graph of $g$, and that
\[
S_\MM = \{\xx\in U : \nabla^2 g(\xx) \text{ is singular}\}.
\]
We want to study the question of when
\begin{equation}
\label{Hessianv2}
\HH^f(S_\MM) = 0,
\end{equation}
where $f$ is a dimension function.

\begin{example}
Suppose that $g$ is an analytic function. Then $S_\MM$ is an analytic set, so if $S_\MM \neq U$, then $\HD(S_\MM) \leq \dim(U) - 1 = n-2$. In this case, \eqref{Hessianv2} holds for all dimension functions $f$ such that $$\liminf_{r\to 0} \frac{\log f(r)}{\log r}>n-2.$$ So the main obstacle to \eqref{Hessianv2} is when $S_\MM = U$, or equivalently when $\nabla^2 g(\xx)$ is singular for all $\xx\in U$. We will show  below that this can only occur when a dense open subset of the manifold $\MM$ can be fibered into (i.e. written as the disjoint union of) line segments.

On the other hand, from Corollary \ref{cor2}, it follows that $$\dim_\HH (\WWW\cap\MM)>n-2$$ and hence in the case of $$\liminf_{r\to 0} \frac{\log f(r)}{\log r}\leq n-2$$ we always have $$\HH^f\left((\WWW\cap\MM\right)=\infty$$ for any dimension function $f$.
\end{example}
\begin{example}
\label{examplealgebraic}
Suppose that $g: U = \R^{n-1}\to \R$ is a multivariate real polynomial of total degree at most $N\geq 2$, i.e.
\[
g(\xx)=\sum_{j=1}^{J} b_{j}x_{1}^{a_{1,j}}x_{2}^{a_{2,j}}\cdots 
x_{n-1}^{a_{n-1,j}}
\]
where $(a_{1,j},\ldots,a_{n-1,j})$ run through all tuples of integers $a_{i,j}\geq 0$ whose sum over $1\leq i\leq n-1$ for each $j$ does not exceed $N$, $J$ denotes the number of such tuples, and $b_{j}\in\R$ for $1\leq j\leq J$. Then as in the previous example, unless the determinant of the Hessian vanishes identically on $\mathbb{R}^{n-1}$, condition \eqref{Hessianv2} is satisfied for dimension functions $f$ such that $$\liminf_{r\to 0} \log f(r)/\log r>n-2,$$ since the exceptional set is a finite union of hypersurfaces. It is thus easy to see that \eqref{Hessianv2} holds for Lebesgue almost all choices of $\underline{b}=(b_{1},\ldots,b_{J})\in\mathbb{R}^{J}$. On the other hand, typically we expect the variety $S_\MM$ to have dimension precisely $n-2$, and thus if $f(t)=t^{s}$ for $s\leq n-2$ then the condition \eqref{Hessianv2} will fail generically. Also notice that the Lebesgue nullset of $\underline{b}=(b_{1},\ldots,b_{J})\in\mathbb{R}^{J}$ where the determinant of the Hessian vanishes identically is nonempty; for example, it contains those $\underline b$ such that the corresponding $g(\xx)$ is independent of one of the coordinates of $\xx$. When $n=2$ and $n=3$, another example is given by
\begin{equation}
\label{b1b6}
g(x_{1},x_{2})=b_{1}x_{1}^{2}+b_{2}x_{1}x_{2}+b_{3}x_{2}^{2}+b_{4}x_{1}+b_{5}x_{2}+b_{6} \quad \text{with}\quad b_{2}^{2}-4b_{1}b_{3}=0.
\end{equation}
 This example can be generalised to any $n\geq 3$.
The vanishing Hessian
condition has been intensely studied
for {\em homogeneous} polynomials $g$. For $n\leq 5$ in our
notation, in this setting the condition implies that after an appropriate change of variables, the function $g$ is independent of at least one of the variables---this was proven by Gordan and Noether \cite{GordanNoether}, who also constructed a counterexample when $n=6$, thus disproving a conjecture of Hesse:
\begin{equation}
\label{6dimex}
g(x_{1},x_{2},x_{3},x_{4},x_{5})=x_{1}^{2}x_{3}+x_{1}x_{2}x_{4}+x_{2}^{2}x_{5}+x_{1}^{3}+x_{2}^{3}.
\end{equation}
See also \cite{Lossen} for more on vanishing Hessian.  Note that if we let $b_4=b_5=b_6=0$ in \eqref{b1b6}, then we have $g(x_{1},x_{2})=(\sqrt{b_{1}}x_{1} \pm \sqrt{b_{3}}x_{2})^{2}$ and thus Gordan and Noether's theorem can be demonstrated by the change of variables $y_1 = \sqrt{b_{1}}x_{1} \pm \sqrt{b_{3}}x_{2}$, $y_2 = \sqrt b_2 x_1 \mp \sqrt b_1 x_2$. Correspondingly, the graph of $g$ can be fibered into lines of the form $y_1 = \text{constant}$. Also note that if $g$ is given by \eqref{6dimex}, then $\MM$ is fibered into three-dimensional planes of the form $$\MM \cap \{x_1=a_1,x_2=a_2\}.$$ However, the proof of Theorem \ref{theoremfibering} below only yields a fibering into lines:
\[
\{(\xx,g(\xx)) + t (0,0,x_2^2,-2x_1 x_2,x_1^2,0) : t\in\R\}.
\]
\end{example}
\begin{example} In the case $n=2$ 
where $g: \mathbb{R}\to\mathbb{R}$ 
and $S_\MM=\{ x\in\mathbb{R}: g^{\prime\prime}(x)=0\}$, there
exist
$C^{2}$ functions $g$ for which the induced set $S_\MM$ has
positive one-dimensional Lebesgue measure but empty interior. To see this,
recall that any closed set $T$ 
can be realised as the vanishing set of some continuous
function $h$ by letting $h(x)=d(x,T)$ to be the distance from $x$ to $T$. Taking a fat Cantor set~\cite[pp.140--141]{AliprantisBurkinshaw}
for $T$ and integrating the continuous function 
$h(x)=d(x,T)$ twice, 
i.e. $$r(x)= \int_{t=0}^{x} h(t) dt \quad \text{ and}\quad g(x)= \int_{t=0}^{x} r(t) dt, $$ one checks that
we obtain a suitable function $g$. 
\end{example}

\begin{theorem}
\label{theoremfibering}
Let $U$ be a connected bounded open subset of $\R^{n-1}$. Let $g:U \to \R$ be a $C^2$ function whose Hessian is singular at every point. Then there is a dense open subset $V\subset U$ such that the manifold $\MM = \Gamma(g\given V) = G(V)$ can be fibered into line segments, where $G(\xx) = (\xx,g(\xx))$.
\end{theorem}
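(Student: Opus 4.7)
My plan is to produce, at every point of a dense open $V\subset U$, a line segment through the corresponding point of $\MM$ in a null direction of $\nabla^2 g$, and then partition these segments to get the fibering. I would take $V\subset U$ to be the open dense set on which the rank of $\nabla^2 g$ is locally constant; openness follows from lower semi-continuity of the rank, and density from the observation that inside any open $W\subset U$ the rank attains its maximum on a nonempty open subset of $W$. On $V$ the kernel distribution $\xx \mapsto \ker \nabla^2 g(\xx)$ is continuous of positive dimension $n - 1 - r$, where $r\leq n-2$ is the local rank, and smooth whenever $g$ is.

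The analytic core is the claim that this kernel distribution is \emph{auto-parallel} for the flat Euclidean connection: the ordinary directional derivative $\partial_Y X(\xx)$ lies in $\ker \nabla^2 g(\xx)$ whenever $X, Y$ are smooth local sections of the kernel. I would fix a smooth local section $\vv$ of $\ker \nabla^2 g$ and differentiate the pointwise identity $\vv(\xx)^\top \nabla^2 g(\xx) \vv(\xx) \equiv 0$ in $\xx_i$. The terms involving derivatives of $\vv$ drop out because they are contracted against $\nabla^2 g \cdot \vv = 0$, leaving
\[
\sum_{j,k} v_j(\xx) v_k(\xx) \partial_i \partial_j \partial_k g(\xx) = 0 \qquad \text{for every } i.
\]
A standard polarisation then shows that the symmetric trilinear form $C(\xx) := \partial^3 g(\xx)$ satisfies $C(\xx)(\ww,\vv_1,\vv_2) = 0$ for every $\ww \in \R^{n-1}$ whenever $\vv_1,\vv_2 \in \ker\nabla^2 g(\xx)$. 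Applied to sections $X, Y$ of the kernel this reads $(\partial_Y \nabla^2 g)\cdot X = 0$, and differentiating $\nabla^2 g \cdot X \equiv 0$ along $Y$ then gives $\nabla^2 g \cdot (\partial_Y X) = 0$, so $\partial_Y X \in \ker \nabla^2 g$.

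Once auto-parallelism is in hand, the kernel distribution is automatically involutive, and its integral leaves $L$ are totally geodesic in flat $\R^{n-1}$, hence open pieces of affine subspaces of dimension $n-1-r$. Along $TL = \ker \nabla^2 g$ the directional derivatives of $\nabla g$ vanish, so $\nabla g$ is constant on every connected component of $L$; integrating along straight segments inside $L$ then shows $g|_L(\xx) = g(\xx_0) + \nabla g(\xx_0)\cdot(\xx - \xx_0)$, so $G(L)$ is an open piece of an affine subspace of $\R^n$ contained in $\MM$. To finish, I would fibre each affine leaf $L$ into parallel one-dimensional segments (translates of any fixed line inside $L$); applying $G$ then realises $G(V)$ as a disjoint union of line segments on $\MM$, giving the sought fibering on the dense open set $V$.

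The main obstacle is establishing the polarised cubic identity and the resulting auto-parallelism of the kernel distribution; this is where one genuinely exploits the hypothesis that $\nabla^2 g$ is singular \emph{at every point}, rather than at a single point. A secondary subtlety is a regularity gap---the computation differentiates $\nabla^2 g$ once and so nominally needs $g\in C^3$---which can presumably be bridged by mollification or by rephrasing auto-parallelism as the $C^2$ statement that the tangent hyperplane to $\MM$ is constant along straight null directions. Granting the auto-parallelism, the remainder is a routine appeal to the Frobenius theorem and the elementary fact that a totally geodesic submanifold of flat space is locally an affine subspace.
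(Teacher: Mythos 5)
Your argument is genuinely different from the paper's and is correct under the stronger hypothesis $g\in C^3$ (so that third derivatives exist and $\ker\nabla^2 g$ is a $C^1$ distribution on the locally-constant-rank locus). In that regularity class the chain---differentiating $\vv^{\top}\nabla^2 g\,\vv\equiv 0$, polarising to get $\partial^3 g(\ww,\vv_1,\vv_2)=0$ for kernel vectors $\vv_1,\vv_2$, deducing auto-parallelism and Frobenius-integrability with affine leaves on which $g$ is affine, and finally sub-fibering each leaf into parallel segments---is sound, and in fact gives something stronger than the paper: a fibering into affine pieces of dimension $\dim\ker\nabla^2 g$, not just lines. The paper's proof, by contrast, yields only a line foliation, but by a softer, lower-regularity device: pick a transversal $\NN$ to $\ker\nabla^2 g(\pp)$, observe that $\nabla g\given\NN$ is a $C^1$ embedding, take a $C^1$ defining function $h$ of its image, and set $\vv(\xx)=\nabla h(\nabla g(\xx))$. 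Differentiating $h\circ\nabla g\equiv 0$ puts $\vv$ in the kernel using one derivative of $h$ and two of $g$, and $\vv$ is automatically constant along its own integral curves because it factors through $\nabla g$, which is constant along kernel-tangent curves---so those curves are straight lines without ever touching a third derivative of $g$.

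That is precisely where your proof, as written, has a genuine gap for the stated hypothesis $g\in C^2$. Your computation needs $\nabla^2 g$ to be differentiable and the kernel distribution to have a $C^1$ section; for $g\in C^2$, $\nabla^2 g$ is merely continuous and the kernel is only a $C^0$ distribution, so the cubic identity cannot be formulated classically. Neither of your suggested repairs closes the gap. Mollification destroys the hypothesis: there is no reason for $\nabla^2 g_\epsilon$ to be singular at every point, since the determinant is nonlinear and does not commute with convolution. And the ``rephrasing''---that the tangent hyperplane to $\MM$ is constant along straight null directions---is not a weaker $C^2$ formulation of auto-parallelism but is exactly the nontrivial claim that must be proved; for a generic $\vv_0\in\ker\nabla^2 g(\xx_0)$ the line $\xx_0+t\vv_0$ need not stay in the kernel for $t\neq 0$. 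The paper's choice of $\vv$ is engineered so that exactly this constancy holds for free. To salvage your geometric picture under $C^2$ you would need an independent argument producing at least one such well-behaved null direction at each point, which your proposal does not supply.
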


\begin{proof}
Let $\pp \in U$ be a point such that the function
\[
\xx \mapsto \dim\Ker\nabla^2 g(\xx)
\]
is constant in a neighborhood of $\pp$. Note that since this function is integer-valued and upper semicontinuous, the set of such points $\pp$ forms a dense open set in $U$. Also note that by our assumption on $g$, the constant value that this function takes on in a neighborhood of $\pp$ is greater than or equal to $1$. Now let $\NN \subset U$ be a manifold which passes through $\pp$ orthogonally to $\Ker\nabla^2 g(\pp)$, for example the affine subspace passing through $\pp$ and orthogonal to $\Ker\nabla^2 g(\pp)$.  Note that if $\NN$ is not an affine subspace, then the tangent space at $\pp$ is orthogonal to
$\Ker\nabla^2 g(\pp)$. Then the Jacobian of the transformation $$\nabla^2 g(\pp) : T_{\pp} \NN \to \R^{n-1}$$ is injective, so by shrinking $\NN$ if necessary, we can assume that the map $\nabla g:\NN\to \R^{n-1}$ is an embedding. Note that by shrinking of $\NN$ we mean taking its intersection with a small neighbourhood of $\pp$ (not a homothety). On the other hand, if $\gamma:I \subset\R \to U$ is any path such that $$\gamma'(t) \in \Ker\nabla^2 g(\gamma(t)) \text { for all } \ t,$$ then $\nabla g$ is constant on the image of $\gamma$. Let $W$ be a neighbourhood of $\pp$ such that every point in $W$ can be connected to a point of $\NN$ by such a path; then the image of $W$ under $\nabla g$ is contained in the image of $\NN$.

Since $\nabla g :\NN \to \R^{n-1}$ is an embedding, $\nabla g(\NN)$ is a smooth manifold. Thus, there exist a neighbourhood $X$ of the point $\qq = \nabla g(\pp)$ and a smooth function $h:X\to \R$ such that $h(\nabla g(\xx)) = 0$ for all $\xx\in \NN$ (and thus all $\xx\in W$), but $\nabla h(\yy) \neq \0$ for all $\yy\in X$. By shrinking $W$ we can assume that $$\nabla g(\xx) \in X \text{ for all } \xx\in W.$$

Taking the derivative of the equation $h(\nabla g(\xx)) = 0$ with respect to the $i$th coordinate gives 
\[
0 = h'(\nabla g(\xx)) [ \del_i \nabla g(\xx) ] = \big(\nabla^2 g(\xx) \cdot \nabla h(\nabla g(\xx))\big)_i.
\]
Since $i$ was arbitrary, this tells us that $$\vv(\xx) := \nabla h(\nabla g(\xx)) \in \Ker \nabla^2 g(\xx).$$ As before, if $\gamma : I \subset \R \to W$ is any curve such that $\gamma'(t) = \vv(\gamma(t))$ for all $t$, then $\nabla g$ is constant on the image of $\gamma$. (Note that since $\vv$ is continuously differentiable, for each $\xx\in W$ there exists such a $\gamma$ such that $\gamma(0) = \xx$.) But then $\vv$ is also constant on the image of $\gamma$, and so $\gamma'$ is constant. Thus $\gamma$ is linear, and since $\nabla g$ is constant on the image of $\gamma$, $g\circ \gamma$ is also linear. Thus, the image of $g\circ\gamma$ is a line segment in the manifold $\MM$, and since $\gamma$ was arbitrary (relative to the condition $\gamma'(t) = \vv(\gamma(t))$), this shows that $G(W)$ can be fibered into such line segments. Finally, since $\pp$ was arbitrary we can take the union of the corresponding sets $G(W)$ to get a dense open subset of $\MM$ that can be fibered into line segments.
\end{proof}

\providecommand{\bysame}{\leavevmode\hbox to3em{\hrulefill}\thinspace}
\providecommand{\MR}{\relax\ifhmode\unskip\space\fi MR }
% \MRhref is called by the amsart/book/proc definition of \MR.
\providecommand{\MRhref}[2]{%
  \href{http://www.ams.org/mathscinet-getitem?mr=#1}{#2}
}
\providecommand{\href}[2]{#2}

\end{document}